\newcommand{\Z}{\mbox{$\mathbb Z$}}	
\newcommand{\Q}{\mbox{$\mathbb Q$}}	
\newtheorem{theorem}{Theorem}[section]
\newtheorem{lemma}[theorem]{Lemma}
\newtheorem{corollary}[theorem]{Corollary}
\theoremstyle{definition}
\newtheorem{example}[theorem]{Example}
\theoremstyle{remark}
\newtheorem{remark}[theorem]{Remark}
\begin{document}
	\addcontentsline{toc}{chapter}{\bf Notation}
	\addcontentsline{toc}{chapter}{\bf Asymptotic Notation for Runtime Analysis}
	\pagenumbering{arabic}
	\author[]{{\small { ANKITA JINDAL\footnote{Stat-Math Unit, ISI
					Bangalore Centre, Bangalore, India. Email : ankitajindal1203@gmail.com}  }~~AND~ SUDESH KAUR KHANDUJA\footnote{Corresponding author} \footnote{Indian Institute of Science Education and Research Mohali, SAS Nagar, India  \textsc{\&} Department of Mathematics, Panjab University, Chandigarh, India. Email : skhanduja@iisermohali.ac.in}} \\{\small{(Dedicated to Professor Andr\'{e} Leroy on his $65^{th}$ birthday)} }} 
	
\date{}
\renewcommand\Authands{}

\title{\large{\bf{\textsc{ An extension of Schur's irreducibility result }}}}

\maketitle
\begin{center}
	{\large{\bf {\textsc{Abstract}}}}
\end{center}
\noindent 
Let $n\geq 2$ be an integer. Let $\phi(x)$ belonging to $\Z[x]$ be a monic polynomial which is irreducible modulo all primes less than or equal to $n$. Let $a_0(x), a_1(x), \dots, a_{n-1}(x)$ belonging to $\Z[x]$ be polynomials each having degree less than $\deg \phi(x)$ and $a_n$ be an integer. Assume that $a_n$ and the content of $a_0(x)$ are coprime with $n!$. In the present paper, we prove that the polynomial $\sum\limits_{i=0}^{n-1} a_i(x)\frac{\phi(x)^i}{i!}+a_n\frac{\phi(x)^n}{n!}$ is irreducible over the field $\Q$ of rational numbers. This generalizes a well known result of Schur which states that the polynomial $\sum\limits_{i=0}^{n} a_i\frac{x^i}{i!}$ is irreducible over $\Q$ for all $n\geq 1$ when each $a_i\in \Z$ and $|a_0|=|a_n|=1$. The present paper also extends a result of Filaseta thereby leading to a generalization of the classical Sch\"{o}nemann Irreducibility Criterion.
\bigskip

\noindent \textbf{Keywords :} Irreducible polynomials, Truncated exponential series.

\bigskip
\noindent \textbf{2020 Mathematics Subject Classification :} 11C08, 11R04.
\newpage

\section{Introduction}
A long established theorem of Schur \cite{schur1929a} states that the polynomial $\sum\limits_{i=0}^{n} a_i\frac{x^i}{i!}$ is irreducible over the field $\Q$ of rational numbers for all $n\geq 1$ when each $a_i\in \Z$ and $|a_0|=|a_n|=1$. An alternate proof of this result when $|a_i|=1$ for $i\in\{0,1,\dots,n\}$ was given by Coleman using Newton polygons  with respect to primes (cf. \cite{coleman1987}). In the present paper, we extend the result of Schur as well as of Coleman in different directions using $\phi$-Newton polygons defined in this section and prove the following.

\begin{theorem}\label{genschur}
	Let $n\geq2$ be an integer. Let $\phi(x)$ belonging to $\Z[x]$ be a monic polynomial which is irreducible modulo all primes less than or equal to $n$. Let $a_0(x), a_1(x), \dots, a_{n-1}(x)$ belonging to $\Z[x]$ be polynomials each having degree less than $\deg \phi(x)$ and  $a_n$ be an integer. Assume that $a_n$ and the content{\footnote{The content of a polynomial with coefficients in the ring $\Z$ of integers is the greatest common divisor of its coefficients.}} of $a_0(x)$ are coprime with $n!$. Then the polynomial
	$$\sum\limits_{i=0}^{n-1}a_i(x)\frac{\phi(x)^i}{i!}+a_n\frac{\phi(x)^n}{n!}$$
	is irreducible over $\Q$.
\end{theorem}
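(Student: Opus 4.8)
The plan is to prove irreducibility by analyzing the factorization structure forced by the $\phi$-Newton polygon of the polynomial with respect to a suitable prime. Let me sketch the approach.

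Let me denote $F(x) = \sum_{i=0}^{n-1} a_i(x)\frac{\phi(x)^i}{i!} + a_n\frac{\phi(x)^n}{n!}$. After clearing denominators, $n! F(x) = \sum_{i=0}^{n} b_i(x)\phi(x)^i$ where $b_i(x) = \frac{n!}{i!} a_i(x)$ are integer polynomials. The key tool should be the $\phi$-adic Newton polygon with respect to primes $p \le n$.

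So I want to:
1. For each prime $p \le n$, compute the $\phi$-Newton polygon of $n! F(x)$.
2. The valuation of the leading term involves $v_p(n!/n!) = 0$ times $a_n$ (coprime to $n!$), so $v_p(a_n) = 0$, top vertex at height 0.
3. The constant term $b_0 = n! a_0(x)$ has $v_p = v_p(n!)$ plus $v_p(\text{content})= 0$, so bottom vertex at height $v_p(n!)$.
4. General $i$-th term: height $v_p(n!/i!) = v_p(n!) - v_p(i!)$.

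Let me think about what the Newton polygon looks like and whether it forces irreducibility.

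Write $G(x) = n!\,F(x) = \sum_{i=0}^{n} b_i(x)\,\phi(x)^i$, where $b_i(x) = \frac{n!}{i!}a_i(x) \in \Z[x]$ for $i<n$ and $b_n = a_n$; since $\deg b_i < \deg\phi$ for every $i$, this is a genuine $\phi$-expansion, and $F$ is irreducible over $\Q$ iff $G$ is. Fix a prime $p \le n$. Because $\bar\phi$ is irreducible mod $p$, $\phi$ is irreducible over $\Q_p$ and a root $\theta$ generates the unramified extension $K=\Q_p(\theta)$ of degree $m:=\deg\phi$, with value group $\Z$. The plan is to suppose $G$ factors nontrivially over $\Q$ and to derive a contradiction by reading off, from the $\phi$-Newton polygons of $G$ with respect to the various primes $p\le n$, constraints on the degrees of the factors that cannot be simultaneously satisfied.

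First I would record the polygon data. Writing $w_i = v_p(\mathrm{content}(b_i))$, one has $w_i = v_p(n!/i!) + v_p(\mathrm{content}(a_i)) \ge v_p(n!/i!)$, with equality forced at the endpoints: $w_0 = v_p(n!)$ (since $\mathrm{content}(a_0)$ is coprime to $n!$) and $w_n = v_p(a_n) = 0$ (since $a_n$ is coprime to $n!$). Thus the $\phi$-Newton polygon $N_p(G)$ runs from $(0,v_p(n!))$ to $(n,0)$ and lies on or above the lower hull of the points $(i, v_p(n!/i!))$. Next I would invoke the theorem of the polygon (Ore/Montes): every irreducible factor of $G$ over $\Q_p$ has residue degree divisible by $m$, hence $\Q_p$-degree divisible by $m$; consequently any factorization $G = AB$ over $\Q$ has $\deg A = km$ for some integer $k$. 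Moreover $\phi$-Newton polygons are additive under multiplication (Minkowski sum), so $N_p(G)=N_p(A)\oplus N_p(B)$; in particular, for each slope $-h/e$ in lowest terms, the horizontal length of the slope-$(h/e)$ part of $A$ (resp. $B$) is a multiple of $e$, and these lengths sum over all slopes to $k$ (resp. $n-k$).

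Now suppose $G=AB$ nontrivially and take $A$ to be the smaller factor, so $1\le k\le\lfloor n/2\rfloor$. The goal is to exhibit, for this $k$, a prime $p\le n$ whose polygon $N_p(G)$ contains a side of slope $-h/e$ (lowest terms) with $e > n-k$: such a side has horizontal length a positive multiple of $e$, hence at least $e > n-k \ge k$, and by the additivity above it cannot be distributed between $A$ (total length $k$) and $B$ (total length $n-k$) since both available lengths are $< e$ while each part must be a multiple of $e$; this is the desired contradiction. The arithmetic input supplying the prime is a Sylvester--Erd\H{o}s type result: for $n\ge 2k$ the binomial coefficient $\binom{n}{k}$ has a prime divisor $p>k$, and such $p$ satisfies $p\le n$ and $v_p\!\big(n!/(n-k)!\big)\ge 1$, i.e. $p$ divides one of $n-k+1,\dots,n$. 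Hence the largest multiple $M_p=p\lfloor n/p\rfloor$ of $p$ not exceeding $n$ satisfies $M_p > n-k$, so $v_p(n!/i!)>0$ for all $i<M_p$ and the polygon $N_p(G)$ can first reach height $0$ only at an index $\ge M_p > n-k$; the last descending side, ending at that index, is the candidate block of large denominator.

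The main obstacle is precisely the last step: converting ``$N_p(G)$ stays positive until an index exceeding $n-k$'' into the existence of a single side whose slope has denominator $e>n-k$. For primes $p$ with $n/2<p\le n$ the relevant portion of the polygon is a clean single segment of slope $-1/j_0$ with $j_0\ge p$, so the block is literally indivisible; but for general $p\le n$ the polygon may break into several descending sides, and one must argue that the denominators accumulated over the window $(n-k,\,M_p]$ still force an indivisible length exceeding $n-k$, while also controlling the upward perturbations $v_p(\mathrm{content}(a_i))$ coming from the intermediate coefficients (which can only raise the polygon, never the endpoints). I expect this combinatorial analysis of the multi-slope polygon, together with the verification that the associated residual polynomials do not spuriously split off small-degree factors, to be the technical heart of the proof; the reduction, the valuation computation of $w_0,\dots,w_n$, and the Minkowski-additivity bookkeeping are routine by comparison.
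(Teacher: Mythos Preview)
Your setup---clearing denominators, reading the $\phi$-Newton polygon of $G(x)=\sum_{i}\frac{n!}{i!}a_i(x)\phi(x)^i$ at primes $p\le n$, and invoking Sylvester--Schur to produce a prime $p>k$ dividing some integer in $\{n-k+1,\dots,n\}$---matches the paper exactly. The gap is in the target you set for the polygon.

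You aim to find, for the given $k\le n/2$, a side of $N_p(G)$ whose slope has reduced denominator $e>n-k$, so that neither the length-$k$ nor the length-$(n-k)$ factor can absorb it. This is too strong and is simply false in general. Take $n=6$, $k=1$; Sylvester--Schur forces $p\in\{2,3\}$, and for either prime the edges of the polygon of $\sum (6!/i!)\phi^i$ have denominators at most $4$ and $3$ respectively, never exceeding $n-k=5$. Even allowing any $p\le 6$, no edge has denominator $>5$. Your final paragraph senses the difficulty but frames it as a combinatorial refinement of the same goal; the goal itself is wrong.

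What actually works (and what the paper does) is the weaker, uniform bound: for the Sylvester--Schur prime $p\ge k+1$, \emph{every} positive-slope edge of the polygon has slope $<\frac{1}{p-1}\le\frac{1}{k}$ (this follows from $v_p(i!)/i<1/(p-1)$). Hence any positive-slope edge of the small factor $A$, sharing a slope with some edge of $G$, would have horizontal length $>k\ge r$, impossible since $A$'s polygon has total horizontal span $r\le k$. So $A$'s polygon is entirely horizontal. Now a second ingredient is needed: choose $\ell\in\{0,\dots,k-1\}$ with $p\mid(n-\ell)$; then $v_p(n!/i!)>0$ for all $i\le n-\ell-1$, so the horizontal edge of $N_p(G)$ has length at most $\ell<r$, contradicting the Minkowski additivity of the horizontal part. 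In short, the contradiction comes from ``all slopes $<1/k$'' together with a flat-part length bound tied to $\ell$, not from a single indivisible block of width $>n-k$.

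A secondary issue: your claim that every $\Q$-factor has degree a multiple of $m=\deg\phi$ relies on $\bar G$ being a unit times $\bar\phi^{\,n}$ modulo $p$, which holds only for primes dividing $n$; the Sylvester--Schur prime need not divide $n$. The paper handles this by proving separately (using any prime $q\mid n$) that $G$ has no factor of degree in $(0,\deg\phi)$, and then phrasing the Newton-polygon lemma for degree \emph{intervals} $[(\ell+1)\deg\phi,(k+1)\deg\phi)$ rather than just multiples of $\deg\phi$.
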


It may be pointed out that in the above theorem the assumptions ``$a_n$ and the content of $a_0(x)$ are coprime with $n!$" cannot be dispensed with. For example, consider the polynomial $\phi(x)=x^2+x+1$ which is irreducible modulo 2. The polynomial $F(x)=\frac{\phi(x)^2}{2!}+\phi(x)-4=\frac{1}{2}(\phi(x)+4)(\phi(x)-2)$ is reducible over $\Q.$ Similarly the polynomial $G(x)=12\frac{\phi(x)^2}{2!}-\phi(x)-1=(3\phi(x)+1)(2\phi(x)-1)$ is reducible over $\Q.$\vspace{0.2cm}

We also give below an example to show that Theorem \ref{genschur} may not hold if $a_n$ is replaced by a (monic) polynomial $a_n(x)$ with integer coefficients having degree less than $\deg \phi (x)$. Consider $\phi(x)=x^2+x+1$, $a_2(x)=x+1$,  $a_1(x)=x+2$ and  $a_0(x)=4x+3$. Then $$a_2(x)\frac{\phi(x)^2}{2!}+a_1(x)\phi(x)+a_0(x)$$
has $-1$ as a root.

Assuming a weaker condition on $\phi(x)$ in Theorem \ref{genschur}, we also prove the following result using the Factorisation Theorem of Ore (cf. \cite[Theorem 1.1]{Kh-Ku} and \cite{ore}).
\begin{theorem}\label{gencoleman}
	Let $n\geq 2$ be an integer. Let $\phi(x)$ belonging to $ \Z[x]$ be a monic polynomial which is irreducible modulo all primes dividing $n$. Let $a_0(x), a_1(x), \dots, a_{n-1}(x)$ belonging to $\Z[x]$ be polynomials each having degree less than $\deg \phi$. Assume that the  content of the polynomial $\prod\limits_{i=0}^{n-1}a_i(x)$ is coprime with $n$. Then the polynomial
	$$\sum\limits_{i=0}^{n-1}a_i(x)\frac{\phi(x)^i}{i!}+\frac{\phi(x)^n}{n!}$$
	is irreducible over $\Q$.
\end{theorem}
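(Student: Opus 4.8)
The plan is to clear denominators, reduce to a monic integer polynomial, and then run a $\phi$-Newton polygon argument simultaneously at every prime dividing $n$, combining the resulting divisibility constraints on the degrees of a hypothetical factor. First I would set $F(x)=n!\,f(x)=\phi^{n}+\sum_{i=0}^{n-1}\frac{n!}{i!}\,a_i(x)\,\phi^{i}\in\Z[x]$, which is monic of degree $n\deg\phi$, so irreducibility of $f$ over $\Q$ is equivalent to that of $F$. Its $\phi$-adic expansion has coefficients $c_i=\frac{n!}{i!}a_i$ for $i<n$ and $c_n=1$, each of degree $<\deg\phi$. The hypothesis that the content of $\prod_{i=0}^{n-1}a_i$ is coprime to $n$ means, by Gauss's lemma, that $p\nmid\mathrm{cont}(a_i)$ for every $i$ and every prime $p\mid n$; hence for such $p$ the $p$-adic valuation of the content of $c_i$ equals $v_p(n!)-v_p(i!)$ for all $0\le i\le n$. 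Thus the $\phi$-Newton polygon of $F$ with respect to $p$ is the lower convex hull of the points $(i,\,v_p(n!)-v_p(i!))$, $0\le i\le n$, which is exactly the classical $p$-adic Newton polygon of the truncated exponential $\sum_{i=0}^n \frac{n!}{i!}x^i$.

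Next I would argue by contradiction. Suppose $F=GH$ with $G,H\in\Z[x]$ monic and nonconstant (legitimate after the Gauss-lemma reduction to monic factors). Fix a prime $p\mid n$; since $\phi$ is irreducible modulo $p$ by hypothesis, the Factorisation Theorem of Ore applies: over the field $\mathbb{Q}_p$ of $p$-adic numbers, $F$ breaks up according to the edges of its $\phi$-Newton polygon, and each irreducible factor attached to an edge of slope $-h/e$ (written in lowest terms) has degree divisible by $e\deg\phi$, hence $\phi$-degree divisible by $e$. Since $G$ is a product of some of these $\mathbb{Q}_p$-irreducible factors, $\deg\phi\mid\deg G$, and the integer $\deg_\phi G:=\deg G/\deg\phi$ is a sum of multiples of the various edge-denominators $e$.

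The heart of the argument is then a purely combinatorial lemma about this polygon: writing $a=v_p(n)$, every edge of the hull has slope-denominator $e$ divisible by $p^{a}$. Granting this, $p^{a}\mid\deg_\phi G$ for every $p\mid n$; as $n=\prod_{p\mid n}p^{v_p(n)}$ is a product of pairwise coprime prime powers, it follows that $n\mid\deg_\phi G$. But both factors being nonconstant forces $\deg\phi\le\deg G\le(n-1)\deg\phi$, i.e. $1\le\deg_\phi G\le n-1$, a contradiction; hence $F$, and therefore $f$, is irreducible over $\Q$.

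The main obstacle is the lemma on slope-denominators. I expect to prove it from Legendre's formula $v_p(i!)=\frac{i-s_p(i)}{p-1}$, where $s_p$ denotes the base-$p$ digit sum: for two consecutive vertices $u<w$ of the hull the required statement reduces to $v_p\big(v_p(w!)-v_p(u!)\big)\le v_p(w-u)-a$, which I would establish by using the convexity (supporting-line) property of the hull to control how the multiples of the successive powers of $p$ are distributed across the interval $(u,w]$. This is precisely the denominator estimate underlying Coleman's Newton-polygon proof for the truncated exponential, now transported to the $\phi$-adic setting by means of the content hypothesis; the remaining points, namely the verification that Ore's theorem yields the stated divisibility $e\deg\phi\mid(\text{degree of each local factor})$ and the reduction to monic factors in $\Z[x]$, are routine.
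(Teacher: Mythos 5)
Your proposal is correct and follows essentially the same route as the paper's proof: clear denominators to get a monic $F(x)$, observe via the content hypothesis that for each prime $p\mid n$ the $\phi$-Newton polygon of $F(x)$ coincides with Coleman's polygon for the truncated exponential, invoke Ore's Factorisation Theorem over $\Q_p$ to get $p^{v_p(n)}\deg \phi$ dividing the degree of every factor, and combine these divisibilities over all $p\mid n$ to force $n\deg\phi \mid \deg G$. The only step you defer, your ``main obstacle'' lemma on slope denominators, is exactly what the paper settles by Coleman's explicit computation: writing $n=c_1p^{m_1}+\cdots+c_sp^{m_s}$ with $0<m_1<\cdots<m_s$ and $0<c_i<p$, the vertices of the polygon occur at the partial sums $z_i=c_1p^{m_1}+\cdots+c_ip^{m_i}$, and the $i$-th edge has slope $\frac{p^{m_i}-1}{p^{m_i}(p-1)}$, whose denominator in lowest terms is $p^{m_i}$, hence divisible by $p^{m_1}=p^{v_p(n)}$.
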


We wish to point out that analogues of Theorems \ref{genschur} and \ref{gencoleman} do not hold for $n=1$ because if $\phi(x)\in \Z[x]$ is a monic polynomial of degree $m \geq 2$, then the polynomial $\phi(x)-(\phi(x)-x^m)$ is reducible over $\Q$.


In the proof of Theorem \ref{genschur}, a key role is played by Theorem \ref{genfilaseta} which extends a well known result of Filaseta \cite[Lemma 2]{filaseta1995}. For stating this theorem, we first introduce the notion of $\phi$-Newton polygon with respect to a prime number modulo which $\phi(x)\in \Z[x]$ is irreducible.

For a prime $p$, $v_p$ will denote the $p$-adic valuation of $\Q$ defined for any non-zero integer $c$ to be the highest power of $p$ dividing $c$. We shall denote by $v_p^x$ the Gaussian valuation extending $v_p$ defined on the polynomial ring $\Z[x]$ by 
\begin{align*}
v_p^x(\sum\limits_{i}c_ix^i)=\min_{i}\{v_p(c_i)\}, \ c_i\in \Z.
\end{align*}
If $\phi(x)$ is a fixed monic polynomial with coefficients in $\Z$, then any $f(x)\in \Z[x]$ can be uniquely written as a finite sum $\sum\limits_{i}f_i(x)\phi(x)^i$ with $\deg f_i(x)< \deg \phi(x)$ for each $i$; this expansion will be referred to as the $\phi$-expansion of $f(x)$.

Let $\phi(x)\in \Z[x]$ be a monic polynomial which is irreducible modulo a given prime $p$. Let $f(x)$ belonging to $\Z[x]$ be a polynomial having $\phi$-expansion $\sum\limits_{i=0}^{n}f_i(x) \phi(x)^i$ with $f_0(x)f_n(x) \neq 0$. Let $P_i$ stand for the point in the plane having coordinates $(i, v_p^x(f_{n-i}(x)))$ when $f_{n-i}(x)\ne 0$, $0\leq i \leq n$. Let $\mu_{ij}$ denote the slope of the line joining the points $P_i$ and $P_j$ if $f_{n-i}(x)f_{n-j}(x)\ne 0$. Let $i_1$ be the largest index $0< i_1 \leq n$ such that 
\begin{align*}
	\mu_{0i_1}=\min \{\mu_{0j}\ |\ 0<j \leq n,\ f_{n-j}(x)\ne 0\}.
\end{align*}
If $i_1<n$, let $i_2$ be the largest index $i_1< i_2 \leq n$ satisfying 
\begin{align*}
	\mu_{i_1i_2}=\min \{\mu_{i_1j}\ |\ i_1<j \leq n,\ f_{n-j}(x)\ne 0\}
\end{align*}
and so on. The $\phi$-Newton polygon of $f(x)$ with respect to $p$ is the polygonal path having segments $P_0P_{i_1}, P_{i_1}P_{i_2}, \dots, P_{i_{k-1}}P_{i_k}$ with $i_k=n$. These segments are called the edges of the $\phi$-Newton polygon of $f(x)$ and their slopes from left to right form a strictly increasing sequence. The $\phi$-Newton polygon with respect to $p$ minus the horizontal part (if any) is called its principal part.

With the above notation, we prove the following theorem, which was proved by Filaseta \cite[Lemma 2]{filaseta1995} in 1995 in the particular case when $\phi(x)=x$; the proof given here is similar to the one given by him.

\begin{theorem} \label{genfilaseta}
	Let $n, k$ and $\ell$ be integers with $0\leq \ell<k\leq \frac n2$ and $p$ be a prime. Let $\phi(x)\in \Z[x]$ be a monic polynomial which is irreducible modulo $p$. Let $f(x)$ belonging to $\Z[x]$ be a monic polynomial not divisible by $\phi(x)$ having $\phi$-expansion $\sum\limits_{i=0}^{n}f_i(x) \phi(x)^i$ with $f_n(x)\ne 0$. Assume that $v_p^x(f_i(x))> 0$ for $0\leq i\leq n-\ell-1$ and the right-most edge of the $\phi$-Newton polygon of $f(x)$ with respect to $p$ has slope less than $\frac 1k$. Let $a_0(x), a_1(x), \dots,a_n(x)$ be polynomials over $\Z$ satisfying the following conditions. 
	\begin{itemize}
		\item[(i)] $\deg a_i(x)< \deg \phi(x)-\deg f_i(x)$ for $0\leq i \leq n$, 
		\item[(ii)] $v_p^x(a_0(x))=0$, i.e., the content of $a_0(x)$ is not divisible by $p$,
		\item[(iii)] the leading coefficient of $a_n(x)$ is not divisible by $p$.
	\end{itemize}
	Then the polynomial $\sum\limits_{i=0}^{n} a_i(x) f_i(x) \phi(x)^i$	does not have a factor in $\Z[x]$ with degree lying in the interval $[(\ell+1)\deg \phi, (k+1)\deg \phi).$
\end{theorem}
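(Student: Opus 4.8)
The plan is to argue by contradiction, mimicking Filaseta's root-theoretic proof of the case $\phi(x)=x$ but carrying it out over the unramified extension of $\Q_p$ cut out by $\phi$. Write $m=\deg\phi$ and $F(x)=\sum_{i=0}^{n}a_i(x)f_i(x)\phi(x)^i$; by hypothesis~(i) each $a_if_i$ has degree $<m$, so this is the genuine $\phi$-expansion of $F$. Suppose $F=gh$ in $\Z[x]$ with $(\ell+1)m\le\deg g<(k+1)m$. Since $f$ is monic its top $\phi$-coordinate $f_n$ is monic, so by~(iii) the leading coefficient of $F$ is prime to $p$; hence so are those of $g$ and $h$, and reduction modulo $p$ preserves their degrees. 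Because $v_p^x(f_i)>0$ for $0\le i\le n-\ell-1$, reducing gives $\overline F=\sum_{i=n-\ell}^{n}\overline{a_i}\,\overline{f_i}\,\overline\phi^{\,i}$, so $\overline\phi^{\,n-\ell}\mid\overline F=\overline g\,\overline h$. As $\overline\phi$ is irreducible in $\F_p[x]$, I set $s=\mathrm{ord}_{\overline\phi}\overline g$ and $t=\mathrm{ord}_{\overline\phi}\overline h$, so that $s+t\ge n-\ell$.

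Next I would pin down $s$ by a degree count. From $sm\le\deg\overline g=\deg g<(k+1)m$ I get $s\le k$. Writing $\deg F=nm+\deg(a_nf_n)$ with $0\le\deg(a_nf_n)<m$ and using $t\le(\deg h)/m=(nm+\deg(a_nf_n)-\deg g)/m$ together with $s+t\ge n-\ell$ and $\deg g\ge(\ell+1)m$ yields $s\ge 1$. Thus $\overline\phi\mid\overline g$, which says exactly that the $\phi$-adic constant term $g_0:=g\bmod\phi$ satisfies $v_p^x(g_0)\ge 1$.

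The heart of the proof is the reverse estimate $v_p^x(g_0)<1$. Here I would pass to $\Q_p$, over which $\phi$ is irreducible (its reduction is), pick a root $\theta$, and work in $L=\Q_p(\theta)$, which is unramified of degree $m$ with $v$ extending $v_p$. The elementary lemma is that $v(b(\theta))=v_p^x(b)$ for every $b\in\Z[x]$ with $\deg b<m$, since $1,\theta,\dots,\theta^{m-1}$ reduce to an $\F_p$-basis of the residue field and so no cancellation occurs. Applying this to $g_0$ and using $\phi(\theta)=0$ gives
$$v_p^x(g_0)=v(g_0(\theta))=v(g(\theta))=\sum_{\alpha}v(\theta-\alpha),$$
the sum over the roots $\alpha$ of $g$ in $\overline{\Q_p}$; these are integral because $g$ has unit leading coefficient, so each term is $\ge 0$ and only the $s$ roots with $\alpha\equiv\theta\pmod{\mathfrak p}$ contribute. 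These $s$ roots are among the roots of $F$ congruent to $\theta$, for which the values $v(\theta-\alpha)$ are precisely the positive slopes of the $\phi$-Newton polygon of $F$.

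Finally I would bound those slopes. Conditions~(ii) and~(iii) force the $\phi$-Newton polygon of $F$ to share its endpoints $(0,v_p^x(f_n))$ and $(n,v_p^x(f_0))$ with that of $f$ while lying weakly above it, so the right-most (hence steepest) slope of $F$'s polygon is at most that of $f$, namely $<1/k$; since slopes increase from left to right, every slope is $<1/k$. Consequently $v_p^x(g_0)<s\cdot\frac1k\le\frac kk=1$, contradicting $v_p^x(g_0)\ge1$ and completing the argument. The main obstacle I anticipate is the third paragraph: correctly identifying the $\phi$-Newton polygon slopes of $F$ with the root-valuations $v(\theta-\alpha)$ over $L$ --- the faithful generalization of Filaseta's ``$v_p(g(0))=\sum v_p(\mathrm{roots})$'' --- and verifying that passing from $f$ to the perturbed polynomial $F$ does not increase the top slope.
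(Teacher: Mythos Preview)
Your argument is correct, but it follows a genuinely different route from the paper's. The paper never passes to $\Q_p(\theta)$ or looks at roots at all: instead it invokes a $\phi$-analogue of Dumas's theorem (their Lemma~3.A), which says that the principal part of the $\phi$-Newton polygon of a product $gh$ is the concatenation (in increasing slope) of translates of the edges of the $\phi$-Newton polygons of $g$ and $h$. From the slope bound $<1/k$ they deduce that any two lattice points on a positive-slope edge of the polygon of $f$ (or $F$) differ in abscissa by more than $k$; since the putative factor $g$ has $\phi$-length $r\le k$, its edges cannot sit inside the positive-slope part, forcing them into the horizontal edge of length $\le\ell<r$, a contradiction. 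Your proof replaces this combinatorial edge-matching by the valuation identity $v_p^x(g_0)=v(g(\theta))=\sum_{\bar\alpha=\bar\theta}v(\theta-\alpha)$ over the unramified extension $L=\Q_p(\theta)$ and the fact that each summand is a slope of the $\phi$-Newton polygon of $F$, hence $<1/k$. The step you flag as the obstacle is indeed the crux, and it does go through: since $\deg b_i<m$ and $\bar\theta$ has degree $m$ over $\F_p$, one gets $v(b_i(\alpha))=v_p^x(b_i)$ for any $\alpha$ with $\bar\alpha=\bar\theta$, so $\phi(\alpha)$ is a root of a one-variable polynomial whose ordinary Newton polygon coincides with the $\phi$-Newton polygon of $F$; moreover $v(\phi(\alpha))=v(\theta-\alpha)$ because the other conjugates $\theta_j$ of $\theta$ satisfy $\bar\theta_j\ne\bar\theta$. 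Both proofs share the final observation that the endpoints of the $\phi$-Newton polygons of $f$ and $F$ agree while $F$'s vertices sit weakly higher, so the maximal slope does not increase. Your approach is closer in spirit to Coleman's root-theoretic treatment and makes the role of $\theta$ explicit; the paper's approach is purely combinatorial and stays inside $\Z[x]$, at the cost of importing the Dumas-type Lemma~3.A from \cite{Kh-Ku}.
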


The classical Sch\"{o}nemann Irreducibility Criterion (cf. \cite{schonemann}, \cite[Chapter 3, Theorem D]{riben}) stated below will be quickly deduced from the above theorem.
\begin{corollary}[Classical Sch\"{o}nemann Irreducibility Criterion]\label{corollary}
Let $\phi(x)\in\Z[x]$ be a monic polynomial which is irreducible modulo a given prime $p$. Let $f(x)$ belonging to $\Z[x]$ be of the form $f(x)= \phi(x)^n+pM(x)$ where $M(x)\in\Z[x]$ has degree less than $n \deg \phi$. If $\phi(x)$ is coprime with $M(x)$ modulo $p$, then $f(x)$ is irreducible over $\Q$.
\end{corollary}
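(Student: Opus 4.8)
The plan is to deduce the Corollary directly from Theorem \ref{genfilaseta} by casting $f(x)$ in the form handled there and then excluding factors of every admissible degree. First I would observe that $f(x)=\phi(x)^n+pM(x)$ is monic of degree $n\deg\phi$ (since $\deg M<n\deg\phi$), so by Gauss's Lemma it suffices to show that $f$ admits no factorisation $f=gh$ with $g,h\in\Z[x]$ monic of positive degree. Reducing modulo $p$ gives $\bar f=\bar\phi^{\,n}$ in $\F_p[x]$; as $\F_p[x]$ is a UFD and $\bar\phi$ is irreducible, every monic divisor of $\bar\phi^{\,n}$ is a power $\bar\phi^{\,d}$, so $\bar g=\bar\phi^{\,d}$ and hence $\deg g=d\deg\phi$ for some integer $d$. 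Interchanging $g$ and $h$ if necessary, I may assume $1\le d\le n/2$.

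Next I would record the $\phi$-expansion. Writing $M(x)=\sum_{i=0}^{n-1}m_i(x)\phi(x)^i$ with $\deg m_i<\deg\phi$, the $\phi$-expansion of $f$ has coefficients $f_i(x)=p\,m_i(x)$ for $0\le i\le n-1$ and $f_n(x)=1$. The hypothesis that $\phi$ is coprime to $M$ modulo $p$ means $\bar\phi\nmid\bar M$, i.e. $v_p^x(m_0(x))=0$, whence $v_p^x(f_0(x))=1$, while $v_p^x(f_i(x))\ge 1$ for all $0\le i\le n-1$ and $v_p^x(f_n(x))=0$. Plotting the points $P_i=(i,v_p^x(f_{n-i}))$, this forces the $\phi$-Newton polygon of $f$ with respect to $p$ to be the single edge joining $(0,0)$ to $(n,1)$: each intermediate point $(i,h_i)$ has $h_i\ge 1>i/n$ and therefore lies strictly above that segment. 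In particular the right-most edge has slope $1/n$.

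Now I would invoke Theorem \ref{genfilaseta} once for each candidate degree. Fixing $d$ with $1\le d\le n/2$ and putting $\ell=d-1$, $k=d$ gives $0\le\ell<k\le n/2$. Taking $a_i(x)=1$ for every $i$ recovers $\sum_{i=0}^n a_i(x)f_i(x)\phi(x)^i=pM(x)+\phi(x)^n=f(x)$, and conditions (i)--(iii) hold at once: $\deg a_i=0<\deg\phi-\deg f_i$ since each $\deg f_i<\deg\phi$, while $v_p^x(a_0)=0$ and the leading coefficient of $a_n=1$ is a unit. The valuation hypothesis $v_p^x(f_i)>0$ for $0\le i\le n-\ell-1$ holds because $n-\ell-1\le n-1$, and the right-most edge has slope $1/n<1/k$ as $k=d<n$. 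Theorem \ref{genfilaseta} then guarantees that $f$ has no factor of degree in $[(\ell+1)\deg\phi,(k+1)\deg\phi)=[d\deg\phi,(d+1)\deg\phi)$, an interval containing $\deg g=d\deg\phi$ --- contradicting the factorisation. Since $d$ ranged over all possibilities, $f$ is irreducible over $\Q$.

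I expect the only real subtlety to lie in the two translation steps, not in any estimate. The first is the reduction modulo $p$ pinning every factor degree to a multiple of $\deg\phi$, with the smaller factor forced to have $d\le n/2$; this is exactly what lets the short interval $[d\deg\phi,(d+1)\deg\phi)$ capture precisely one candidate degree. The second is recognising that the coprimality hypothesis is exactly the condition making $v_p^x(f_0)=1$, which fixes the Newton polygon as the single edge of slope $1/n$ and supplies the slope bound required by the theorem. The case $n=1$ is not reached by this argument, since $k\le n/2$ forces $n\ge 2$, but there $f\equiv\phi\pmod p$ is already irreducible of full degree, so irreducibility is immediate.
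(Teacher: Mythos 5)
Your proof is correct and takes essentially the same route as the paper: identify the $\phi$-Newton polygon of $f$ as the single edge from $(0,0)$ to $(n,1)$ of slope $\frac 1n$, reduce modulo $p$ to force every monic factor to have degree a multiple of $\deg \phi$, and invoke Theorem \ref{genfilaseta} to exclude such factors. The only cosmetic difference is that you apply the theorem once for each candidate degree $d$ (with $\ell=d-1$, $k=d$), whereas the paper applies it a single time with $\ell=0$ and $k=\lfloor n/2\rfloor$, ruling out all degrees in $[\deg\phi,(k+1)\deg\phi)$ at once.
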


\section{Proof of Theorem \ref{gencoleman}}
The following simple result is well known (see \cite[p. 122]{burton}, \cite{coleman1987}). Its proof is omitted.
\begin{lemma}\label{legendre}
	Let $p$ be a prime and $m$ be a positive integer. If $m=a_0+a_1p+\cdots+a_rp^r$ where $\ 0\leq a_i <p$ for each $i$, then
	\begin{align*}
		v_p(m!)=\frac{m-(a_0+a_1+\cdots+a_r)}{p-1}.
	\end{align*}
\end{lemma}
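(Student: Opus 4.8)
The plan is to first establish Legendre's classical summation formula $v_p(m!)=\sum_{j\ge 1}\left\lfloor m/p^j\right\rfloor$ by a counting argument, and then convert the right-hand side into the stated digit-sum expression through a geometric-series computation. The whole argument is elementary; the only thing to watch is the index bookkeeping so that the two interchanges of summation order are carried out correctly.

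First I would write $v_p(m!)=\sum_{k=1}^{m}v_p(k)$, using that the valuation of a product is the sum of the valuations. To evaluate this I would use the identity $v_p(k)=\#\{\,j\ge 1 : p^j\mid k\,\}=\sum_{j\ge 1}[\,p^j\mid k\,]$, where $[\,\cdot\,]$ equals $1$ when the condition holds and $0$ otherwise. Substituting this and interchanging the order of the (finite) summations gives
$$v_p(m!)=\sum_{j\ge 1}\#\{\,k : 1\le k\le m,\ p^j\mid k\,\}=\sum_{j\ge 1}\left\lfloor \frac{m}{p^j}\right\rfloor,$$
since the number of multiples of $p^j$ not exceeding $m$ is exactly $\left\lfloor m/p^j\right\rfloor$; all terms with $j>r$ vanish because then $p^j>m$.

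Next I would pass to the base-$p$ expansion $m=a_0+a_1p+\cdots+a_rp^r$. A short estimate on the discarded fractional part shows that for $1\le j\le r$ one has $\left\lfloor m/p^j\right\rfloor=a_j+a_{j+1}p+\cdots+a_r p^{r-j}$. Summing over $j$ and interchanging the order of summation,
$$\sum_{j=1}^{r}\left\lfloor \frac{m}{p^j}\right\rfloor=\sum_{i=1}^{r}a_i\sum_{j=1}^{i}p^{i-j}=\sum_{i=1}^{r}a_i\,\frac{p^i-1}{p-1},$$
after evaluating the inner geometric series. Factoring out $1/(p-1)$ and using $\sum_{i=1}^{r}a_i p^i=m-a_0$ together with $\sum_{i=1}^{r}a_i=(a_0+a_1+\cdots+a_r)-a_0$, the two occurrences of $a_0$ cancel and the expression collapses to $\dfrac{m-(a_0+a_1+\cdots+a_r)}{p-1}$, as asserted.

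I expect the only delicate points to be the two swaps of summation order and the geometric-series simplification: none of these is deep, but the indices must be tracked carefully so that the $a_0$ contributions cancel at the very end. Everything else is a routine verification, which is presumably why the authors omit the proof of this well-known lemma.
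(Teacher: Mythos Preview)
Your argument is correct and is the standard textbook derivation of Legendre's formula. The paper itself omits the proof entirely, citing it as well known (with references to Burton and Coleman), so there is nothing to compare against; your write-up would serve perfectly well as the missing proof.
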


\begin{proof}[Proof of Theorem \ref{gencoleman}]
Set $a_n(x)=1$ and consider the monic polynomial $F(x)\in \Z[x]$ defined by $$F(x)=\sum\limits_{i=0}^{n}\frac{n!}{i!}a_i(x)\phi(x)^i.$$ We shall prove that $F(x)$ is irreducible over $\Q$. Let $p$ be a prime dividing $n$. By hypothesis, the content of $a_i(x)$ is coprime with $p$ for $0\leq i\leq n-1$, i.e., $v_p^x(a_i(x))=0$ for $0\leq i\leq n-1$. Recall that $a_n(x)=1$. Therefore the $\phi$-Newton polygon of $F(x)$ with respect to $p$ is the polygonal path formed by the lower edges along the convex hull of points of the set $S$ defined by
	$$S=\{(i,v_p(n!/(n-i)!))\ |\ 0\leq i \leq n\}.$$
	Write $$n=c_1p^{m_1}+c_2p^{m_2}+\cdots +c_s p^{m_s},$$ where $0<m_1<m_2<\cdots <m_s$, $0<c_i<p$ for each $i$. Set $z_0=0$ and $$z_i=c_1p^{m_1}+\cdots +c_ip^{m_i}$$ 
	for $1\leq i \leq s$.
	As in \cite{coleman1987}, making use of Lemma \ref{legendre}, it can be easily shown that the polygonal path along the lower convex hull of points of the set $S$ consists of $s$ edges. The $i^{th}$ edge from left to right is the segment joining the points $$(z_{i-1}, v_p(n!/(n-z_{i-1})!)),\ (z_i, v_p(n!/(n-z_i)!)).$$
	Again using Lemma \ref{legendre}, we see that the slope $\lambda_i$ of the $i^{th}$ edge of the $\phi$-Newton polygon of $F(x)$ is given by
	\begin{align*}
		\lambda_i
		&=\frac{-v_p((n-z_{i})!)+v_p((n-z_{i-1})!)}{z_i-z_{i-1}}\\
		&=\frac{z_{i}+(c_{i+1}+\cdots+c_{s})-z_{i-1}-(c_i+\cdots+c_{s})}{(z_i-z_{i-1})(p-1)}.
	\end{align*}
So
\begin{align}\label{2e1}
	\lambda_i=\frac{c_{i}p^{m_{i}}-c_{i}}{c_{i}p^{m_{i}}(p-1)}=\frac{p^{m_{i}}-1}{p^{m_{i}}(p-1)}.
\end{align}
Since $0<m_1<\cdots <m_s$, it is immediate from \eqref{2e1} that $p^{m_1}$ divides the denominator of slope of each edge of the $\phi$-Newton polygon of $F(x)$ with respect to $p$. Therefore in view of Ore's Factorisation Theorem \cite[Theorem 1.1(ii)]{Kh-Ku}, $p^{m_1} \deg \phi$ divides the degree of each factor of $F(x)$ over the field $\Q_p$ of $p$-adic numbers. Keeping in mind that $p^{m_1}$ is the exact power of an arbitrary prime $p$ dividing $n$, the assertion proved above implies that $n \deg \phi$ divides the degree of each irreducible factor of $F(x)$ over $\Q$. Hence $F(x)$ having degree $n \deg \phi$ is irreducible over $\Q$.
\end{proof}

\begin{example}
Consider $\phi(x)=x^3-x^2+1$ or $x^4-x-1$. It can be easily checked that in both situations $\phi(x)$ is irreducible modulo 2 as well as modulo 3. Let $a_0(x), a_1(x), \dots, a_5(x)$ belonging to $\Z[x]$ be polynomials with degree less than $\deg \phi$ and each having content coprime with $6$. By Theorem \ref{gencoleman}, the polynomial 
\begin{align*}
	F(x)=\phi(x)^6+\frac{6!}{5!} a_5(x)\phi(x)^5+\frac{6!}{4!} a_4(x)\phi(x)^4+ \frac{6!}{3!} &a_3(x)\phi(x)^3+ \frac{6!}{2!} a_2(x)\phi(x)^2\\
	&+ 6!  a_1(x) \phi(x) + 6!  a_0(x)
\end{align*}
is irreducible over $\Q$.
\end{example}

\section{Proof of Theorem \ref{genfilaseta}}
	
We shall use the following lemma whose first assertion is proved in \cite[Lemma 2.4]{Kh-Ku}.\\

\noindent\textbf{Lemma 3.A. }{\it Let $\phi(x)$ belonging to $\Z[x]$ be a monic polynomial which is irreducible modulo a given prime $p$. Let $g(x), h(x)$ belonging to $\Z[x]$ be polynomials not divisible by $\phi(x)$ having leading coefficients coprime with $p$. Then the following hold.
\begin{itemize}
	\item [(i)] The principal part of the $\phi$-Newton polygon of $g(x)h(x)$ with respect to $p$ is obtained by constructing a polygonal path beginning with a point on the non-negative side of $x$-axis and using translates of edges of the principal part in the $\phi$-Newton polygons of $g(x), h(x)$ in the increasing order of slopes.
	\item [(ii)] The length of the edge of the $\phi$-Newton polygon of $g(x)h(x)$ with respect to $p$ having slope zero is either the sum of the lengths of the edges of the $\phi$-Newton polygons of $g(x)$, $h(x)$ with respect to $p$ having slope zero or it exceeds this sum by one.
\end{itemize}}
\begin{proof} We only prove assertion (ii).
For any $f(x)\in \Z[x]$, let $\bar{f}(x)$ denote the polynomial obtained by replacing each coefficient of $f(x)$ modulo $p$. Let $g(x)=\sum\limits_{i=0}^{t}g_i(x)\phi(x)^i$ and $h(x)=\sum\limits_{i=0}^{u}h_i(x)\phi(x)^i$ be the $\phi$-expansions of $g(x), h(x)$ respectively with $g_t(x)h_u(x)\ne 0$. Let $r\geq 0$, $s\geq 0$ denote respectively the length of the edges having slope zero of the $\phi$-Newton polygons of $g(x), h(x)$ with respect to $p$. Then
\begin{align*}
	v_p^x(g_{t-r}(x))=0, \ v_p^x(g_{t-i}(x))>0 \textrm{ for } i \in \{r+1, \dots, t\}
\end{align*}
and
\begin{align*} 
	v_p^x(h_{u-s}(x))=0, \ v_p^x(h_{u-i}(x))>0 \textrm{ for } i \in \{s+1, \dots, u\}.
\end{align*}
So the highest power of $\bar{\phi}(x)$ dividing $\bar{g}(x)$, $\bar{h}(x)$ is $t-r$ and $u-s$ respectively. Consequently the highest power of $\bar{\phi}(x)$ dividing $\bar{g}(x)\bar{h}(x)$ is $t+u-r-s$. Note that if we write the $\phi$-expansion of $g(x)h(x)$ as $\sum\limits_{i=0}^{k}d_i(x)\phi(x)^i$ with $d_k(x)\ne 0$, $\deg d_i(x) < \deg \phi(x)$, then either $k=t+u$ or $k=t+u+1$. Therefore the length of the edge having slope zero in the $\phi$-Newton polygon of $g(x)h(x)$ is either $r+s$ or $r+s+1$; this proves assertion (ii)  of the lemma.
\end{proof}

\begin{proof}[Proof of Theorem \ref{genfilaseta}]
Write 
\begin{align*}
	F(x)=\sum\limits_{i=0}^{n} a_i(x) f_i(x) \phi(x)^i.
\end{align*}
We first consider the case when $a_i(x)=1$ for all $i \in \{0,1,\dots, n\}$ so that $F(x)=f(x)$. Recall that $f(x) \in \Z[x]$ is monic. Suppose to the contrary that $f(x)$ has a factor over $\Z$ with degree lying in the interval $[(\ell+1)\deg \phi, (k+1)\deg \phi)$. Then there exist monic polynomials $g(x), h(x) \in \Z[x]$ with $f(x)=g(x)h(x)$ and 
\begin{align}\label{3e1}
	(\ell+1)\deg \phi \leq \deg g <(k+1) \deg \phi.
\end{align}
Let $\sum\limits_{i=0}^{r}g_i(x) \phi(x)^i$ be the $\phi$-expansion of $g(x)$ with $g_r(x)\ne 0$. Note that $r$ is the sum of lengths of horizontal projections of all edges of the $\phi$-Newton polygon of $g(x)$ with respect to $p$. We have
\begin{align}\label{3e2}
	r\deg \phi \leq \deg g= \deg g_r+ r\deg \phi \leq (r+1) \deg \phi-1.
\end{align}
The above inequality together with \eqref{3e1} implies that
\begin{align*}
	r+1\geq \frac{\deg g+1}{\deg \phi}\geq \frac{(\ell+1)\deg \phi+1}{\deg \phi}=\ell+1+\frac {1}{\deg \phi},
\end{align*}
which shows that
\begin{align}\label{3e3}
	r\geq \ell+\frac {1}{\deg \phi}>\ell.
\end{align}

We now consider the $\phi$-Newton polygon of $f(x)$ (with respect to $p$). By hypothesis, each edge of this Newton polygon has slope in the interval $[0, \frac 1k)$. For now, we consider an edge of the $\phi$-Newton polygon of $f(x)$ which has positive slope. Let $(a,b)$ and $(c,d)$ be two points with integer entries on such an edge. Then slope of the line joining these points is the slope of that edge, so that
\begin{align*}
	\frac{1}{|c-a|}\leq \frac{|d-b|}{|c-a|}<\frac{1}{k}.
\end{align*}
Hence $|c-a|>k$. Therefore any two such points on an edge with positive slope of the $\phi$-Newton polygon of $f(x)$ have their $x$-coordinates separated by a distance strictly greater than $k$. 

Note that in view of \eqref{3e2} and \eqref{3e1}, we have
\begin{align*}
	r\leq \frac{\deg g}{\deg \phi}<k+1.
\end{align*}
The above inequality implies that $r$, which is the sum of lengths of horizontal projections of all edges of the $\phi$-Newton polygon of $g(x)$, is less than $k+1$. So by what has been proved in the above paragraph, the translates of the edges of the $\phi$-Newton polygon of $g(x)$ cannot be found within those edges of the $\phi$-Newton polygon of $f(x)$ which have positive slope. Therefore by Lemma 3.A, the left-most edge of the $\phi$-Newton polygon of $f(x)$ must have slope zero and its length is greater than or equal to $r$. In view of the hypothesis
\begin{align*}
	v_p^x(f_{n-i}(x))> 0 \textrm{ for } i \in \{\ell+1, \dots, n\},
\end{align*}
we see that the length of the edge of the $\phi$-Newton polygon of $f(x)$ having slope zero is less than or equal to $\ell$. On recalling that $\ell<r$ by virtue of \eqref{3e3}, we obtain a contradiction in view of Lemma 3.A and hence the theorem is proved when all $a_i(x)$ are 1.

Next we consider the general case when
\begin{align*}
	F(x)=\sum\limits_{i=0}^{n} a_i(x) f_i(x) \phi(x)^i,
\end{align*}
where the polynomials $a_i(x)$ satisfy the conditions $(i), (ii)$ and $(iii)$. It is clear that the left-most endpoint and the right-most endpoint of the $\phi$-Newton polygon of $F(x)$ are the same as the left-most endpoint and the right-most endpoint of the $\phi$-Newton polygon of $f(x)$. Recall that
\begin{align*}
		v_p^x(a_i(x)f_i(x))\geq v_p^x(f_i(x))&> 0 \quad \quad\textrm{ for } i \in \{0, 1, \dots,n-\ell-1\},
\end{align*}
and
\begin{align*}
	v_p^x(a_i(x)f_i(x))\geq	v_p^x(f_i(x))&\geq  0 \quad \quad \textrm{ for } i \in \{0, 1, \dots,n\}.
\end{align*}
Therefore all the edges of the $\phi$-Newton polygon of $F(x)$ lie above or on the line containing the right-most edge of the $\phi$-Newton polygon of $f(x)$. Since the right-most endpoints of the $\phi$-Newton polygons of $F(x)$ and $f(x)$ are the same, we deduce that the slope of the right-most edge of the $\phi$-Newton polygon of $F(x)$ is less than or equal to the slope of the right-most edge of the $\phi$-Newton polygon of $f(x)$ and hence the former is also less than $\frac 1 k$. Recall that the leading coefficient of $a_n(x)$ is coprime with $p$ by hypothesis. Thus by appealing to the first part of the proof and applying Lemma 3.A, Theorem \ref{genfilaseta} follows.
\end{proof}

\begin{remark}
	It may be pointed out that the proof of Theorem \ref{genfilaseta} carries over verbatim when the polynomials $f(x)$, $a_0(x)$, $\dots$, $a_n(x)$ have coefficients in the valuation ring $R_v$ of a discrete valuation $v$ having value group $\Z$. Note that Lemma 3.A used in the above proof also remains valid for polynomials with coefficients in a discrete valuation ring $R_v$ having leading coefficient a unit in $R_v$. Indeed assertion $(i)$ of Lemma 3.A is already proved for such polynomials in \cite[Lemma 2.4]{Kh-Ku}, whereas the proof of assertion $(ii)$ given here carries over verbatim in the case of such polynomials.
\end{remark} 

\begin{proof}[Proof of Corollary \ref{corollary}]
	Let $f(x)=\sum\limits_{i=0}^{n} f_i(x) \phi(x)^i$ be the $\phi$-expansion of $f(x)$. In view of the hypothesis, we have $f_n(x)=1$, $v_p^x(f_i(x))>0$ for $0 \leq i \leq n-1$ and $v_p^x(f_0(x))=1$. So the $\phi$-Newton polygon of $f(x)$ with respect to $p$ consists of a single edge joining the points $(0,0)$ and $(n,1)$ which has slope $\frac 1 n$. Choose $\ell=0$ and $k $ to be the largest integer not exceeding $\frac n2$. Applying Theorem \ref{genfilaseta}, we conclude that $f(x)$ does not have a factor in $\Z[x]$ with degree in the interval $[\deg\phi,(k+1) \deg\phi )$.
	
	Suppose to the contrary that $f(x)$ is reducible over $\Q$. Then $f(x)=g(x)h(x)$ where $g(x)$, $h(x)$ are monic polynomials belonging to $\Z[x]$ of positive degrees. Therefore on passing to $\Z/p\Z$, we have $\bar{f}(x)=\bar{g}(x) \bar{h}(x)$. By hypothesis, $\bar{f}(x)=\bar{\phi}(x)^n$. Since $\bar{\phi}(x)$ is irreducible over $\Z/p\Z$, it follows that $\bar{g}(x)=\bar{\phi}(x)^d$ and $\bar{h}(x)=\bar{\phi}(x)^e$ for some positive integers $d$ and $e$. In particular, both $g(x)$ and $h(x)$ have degrees greater than or equal to the degree of $\phi(x)$. Since one of $g(x)$ or $h(x)$ has degree strictly less than $(k+1)\deg \phi$, we obtain a contradiction in view of what has been proved above.
\end{proof}

\section{Proof of Theorem \ref{genschur}}

The following result to be used in the sequel was proved independently by  Sylvester \cite{sylvester} and Schur \cite{schur1929b}. This was reproved by Erd\H{o}s in 1934 (cf. \cite{erdos}).\vspace{0.2cm}

\noindent \textbf{Theorem 4.A.} {\it Let $m$ and $k$ be positive integers with $m\geq k$. Then there is a prime $p\geq k+1$ which divides one of the integers $m+1, m+2, \dots, m+k$.}\vspace{0.2cm}

\begin{proof}[Proof of Theorem \ref{genschur}]
	Consider the polynomial $F(x)=\sum\limits_{i=0}^{n-1}\frac{n!}{i!}a_i(x)\phi(x)^i+a_n\phi(x)^n$ with coefficients in $\Z$. In view of the hypothesis, $a_0(x)$ is non-zero. So $F(x)$ is not divisible by $\phi(x)$. Since $\deg a_i(x)<\deg \phi(x)$ for each $i$, it follows that the leading coefficient of $F(x)$ is $a_n$, which is coprime with $n!$ by hypothesis. Thus the content $c$ (say) of $F(x)$ is coprime with $n!$. It suffices to prove that $\frac{F(x)}{c}$ is irreducible over $\Z$. We divide the proof of the theorem in two steps.\vspace{0.3cm}
	
	\noindent \textbf{Step I.} In this step, we show that $\frac{F(x)}{c}$ does not have a non-constant factor over $\Z$ with degree less than $\deg \phi(x)$. Suppose to the contrary that $\frac{F(x)}{c}=g(x)h(x)$ where $g(x), h(x)$ are non-constant polynomials having coefficients in $\Z$ with $\deg g(x)< \deg \phi(x)$. Fix a prime $q$ dividing $n$. Then as pointed out above, the leading coefficient of $F(x)$ and hence those of $g(x)$ and $h(x)$ are coprime with $q$. Therefore on passing to $\Z/q\Z$, we see that the degree of $\bar{g}(x)$ is same as that of $g(x)$. Thus $\deg \bar{g}(x)$ is positive and less than $\deg \phi(x)$. This is impossible because $\bar{g}(x)$ is a divisor of $\frac{\bar{F}(x)}{\bar{c}}=\frac{\bar{a_n}}{\bar{c}}\bar{\phi}(x)^n$ and $\bar{\phi}(x)$ is irreducible over $\Z/q\Z$.
	This contradiction proves the assertion of Step 1.\vspace{0.3cm}

	\noindent \textbf{Step II.} In this step, we show that $F(x)$ does not have a factor in $\Z[x]$ with degree lying in the interval $[k \deg \phi, (k+1) \deg \phi)$ for any integer $k$, $1 \leq k \leq \frac n2$. This along with Step I will prove the irreducibility of $F(x)$ over $\Q$.

	Suppose to the contrary that $F(x)$ has a factor in $\Z[x]$ with degree lying in the interval $[k \deg \phi, (k+1) \deg \phi)$ for some positive integer $k\leq \frac n2$. By Theorem 4.A, there exists a prime $p\geq k+1$ dividing one of the members of the set $$\{n-k+1, \dots, n-1,n\}.$$ Choose $\ell \in \{0, 1, \dots, k-1\}$ such that $p|(n-\ell)$; such an integer $\ell$ is unique. 
	
	Define a monic polynomial $f(x)\in \Z[x]$ by $f(x)=\sum\limits_{i=0}^{n}c_i\phi(x)^i$ where $c_i=\frac{n!}{i!}$.  Note that $p|c_i$ for $i \in \{0,1, \dots, n-\ell-1\}$. Let $\lambda$ denote the slope of the right-most edge of the $\phi$-Newton polygon of $f(x)$ with respect to $p$, where the prime $p$ is as above. We shall show that
	\begin{align}\label{4e8}
		\lambda<\frac {1}{k}.
	\end{align}
	Note that the prime $p$, being a divisor of $n!$, does not divide the content of $a_0(x)$ nor does it divide $a_n$ by virtue of the hypothesis. Therefore \eqref{4e8} together with Theorem \ref{genfilaseta} immediately yields that $F(x)$ does not have a factor in $\Z[x]$ with degree lying in the interval $[(\ell+1) \deg \phi, (k+1) \deg \phi)$. Since $\ell+1\leq k$, this will imply that $F(x)$ does not have a factor in $\Z[x]$ with degree in the interval $[k \deg \phi, (k+1) \deg \phi)$, which leads to a contradiction. This contradiction will complete the proof of the theorem.
	 
	It only remains to verify \eqref{4e8}. By definition
	\begin{align*}
		\lambda=\max\limits_{1\leq i \leq n}\left\{\frac{v_p(c_0)-v_p(c_i)}{i}\right\}.
	\end{align*}
	Note that $v_p(c_0)-v_p(c_i) = v_p(n!)-v_p(n!/i!)=v_p(i!)$. It is immediate from Lemma \ref{legendre} that
	\begin{align*}	
		\frac{v_p(i!)}{i} <\frac{1}{p-1};
	\end{align*}
	consequently $\lambda<\frac {1}{p-1}$ which in view of the fact $p\geq k+1$ proves \eqref{4e8}.
\end{proof}

\begin{example}
	Consider $\phi(x)=x^3-x^2+1$ or $x^4-x-1$. It can be easily checked that $\phi(x)$ is irreducible modulo 2 as well as modulo 3. Let $a_0(x)$, $a_1(x)$, $a_2(x)$, $a_3(x)$ belonging to $\Z[x]$ be polynomials each having degree less than $\deg \phi$ and $a_4$ be an integer. Assume that $a_4$ and the content of $a_0(x)$ are coprime with $6$. By Theorem \ref{genschur} the polynomial 
	\begin{align*}
		F(x)= a_4\phi(x)^4+ \frac{4!}{3!}a_3(x)\phi(x)^3+ \frac{4!}{2!}a_2(x)\phi(x)^2 +4! a_1(x) \phi(x) + 4! a_0(x)
	\end{align*}
	is irreducible over $\Q$.
\end{example}

\begin{example}
	Consider $\phi(x)=x^4-x-1$. It can be easily checked that $\phi(x)$ is irreducible modulo 2, 3 and 5. Let $a_i(x) \in \Z[x]$ be polynomials each having degree less than 4 for  $0\leq i\leq 5$ and $a_6$ be an integer. Assume that $a_6$ and the content of $a_0(x)$ are coprime with $30$. By Theorem \ref{genschur}, the polynomial 
	\begin{align*}
		F(x)=a_6\phi(x)^6+ \frac{6!}{5!}a_5(x)\phi(x)^5+ \frac{6!}{4!}a_4(x)\phi(x)^4+ &\frac{6!}{3!}a_3(x) \phi(x)^3+ \frac{6!}{2!} a_2(x)\phi(x)^2\\
		&\hspace{0.5cm} +6! a_1(x) \phi(x) + 6! a_0(x)
	\end{align*}
	is irreducible over $\Q$.
\end{example}

It may be pointed out that irreducibility of the polynomials $F(x)$ in Examples 2.2 and 4.2 
does not seem to follow from any known irreducibility criterion  (cf. \cite{brown2008}, \cite{JakBLMS}, \cite{JakAMS}, \cite{JakJA}, \cite{JakAM}, \cite{Ja-Sa} and \cite{Jho-Kh}) including Generalized Sch\"{o}nemann Irreducibility Criterion{\footnote{This criterion for polynomials over $\Z$ asserts that if $\phi(x)$ belonging to $\Z[x]$ is a monic polynomial which is irreducible modulo a fixed prime $p$ and if the $\phi$-expansion of a polynomial $f(x)$ belonging to $\Z[x]$ given by $\sum\limits_{i=0}^{n} f_i(x)\phi(x)^i$ satisfies (i) $f_0(x)\ne 0$, $f_n(x)=1$, (ii) $\frac{v_p^x(f_i(x))}{n-i}\geq \frac{v_p^x(f_0(x))}{n}>0$ for $0\leq i \leq n-1$, (iii) $v_p^x(f_0(x)), n$ are coprime, then $f(x)$ is irreducible over $\Q$.
}}. \\

{\bf Acknowledgements}\\  The first author is grateful to the National Board for Higher Mathematics, Department of Atomic Energy, India for postdoctoral fellowship. This work was mainly done during the visit of the first author to IISER Mohali in February 2023. She is thankful to IISER Mohali for the facilities provided to her.  The second author is thankful to the Indian National Science Academy for Honorary Scientistship.

\end{document}